\DeclareSymbolFont{AMSb}{U}{msb}{m}{n}
\newtheoremstyle{pineapple}%
  {1em}{1em}%
  {\itshape}{}%
  {\bfseries}{. ---}
  {0.5em}{}
\newtheoremstyle{durian}%
  {1em}{1em}%
  {}{}%
  {\bfseries}{. ---}
  {0.5em}{}
\def\swappedhead#1#2#3{%
  % original definition:
  % \thmnumber{\@upn{\the\thm@headfont#2\@ifnotempty{#1}{.~}}}%
  % change:
  \thmnumber{\@upn{\the\thm@headfont#2\@ifnotempty{#1}{.~}}}%
  \thmname{#1}%
  \thmnote{ {\the\thm@notefont(#3)}}}
\newcommand*\rel@kern[1]{\kern#1\dimexpr\macc@kerna}
\newcommand*\widebar[1]{%
  \begingroup
  \def\mathaccent##1##2{%
    \rel@kern{0.8}%
    \overline{\rel@kern{-0.8}\macc@nucleus\rel@kern{0.2}}%
    \rel@kern{-0.2}%
  }%
  \macc@depth\@ne
  \let\math@bgroup\@empty \let\math@egroup\macc@set@skewchar
  \mathsurround\z@ \frozen@everymath{\mathgroup\macc@group\relax}%
  \macc@set@skewchar\relax
  \let\mathaccentV\macc@nested@a
  \macc@nested@a\relax111{#1}%
  \endgroup
}
\def\@sect#1#2#3#4#5#6[#7]#8{%
  \edef\@toclevel{\ifnum#2=\@m 0\else\number#2\fi}%
  \ifnum #2>\c@secnumdepth \let\@secnumber\@empty
  \else \@xp\let\@xp\@secnumber\csname the#1\endcsname\fi
  \@tempskipa #5\relax
  \ifnum #2>\c@secnumdepth
    \let\@svsec\@empty
  \else
    \refstepcounter{#1}%
    \edef\@secnumpunct{%
      \ifdim\@tempskipa>\z@ % not a run-in section heading
        \@ifnotempty{#8}{.~}%
      \else
        \@ifempty{#8}{.}{.~}%
      \fi
    }%
    \@ifempty{#8}{%
      \ifnum #2=\tw@ \def\@secnumfont{\bfseries}\fi}{}%
    \protected@edef\@svsec{%
      \ifnum#2<\@m
        \@ifundefined{#1name}{}{%
          \ignorespaces\csname #1name\endcsname\space
        }%
      \fi
      \@seccntformat{#1}%
    }%
  \fi
  \ifdim \@tempskipa>\z@ % then this is not a run-in section heading
    \begingroup #6\relax
    \@hangfrom{\hskip #3\relax\@svsec}{\interlinepenalty\@M #8\par}%
    \endgroup
    \ifnum#2>\@m \else \@tocwrite{#1}{#8}\fi
  \else
  \def\@svsechd{#6\hskip #3\@svsec
    \@ifnotempty{#8}{\ignorespaces#8\unskip
       \@addpunct.}%
    \ifnum#2>\@m \else \@tocwrite{#1}{#8}\fi
  }%
  \fi
  \global\@nobreaktrue
  \@xsect{#5}}
\def\@seccntformat#1{%
  \protect\textup{\protect\@secnumfont
    \ifnum\pdfstrcmp{subsection}{#1}=0 \bfseries\fi% subsection # in \bfseries
    \csname the#1\endcsname
    \protect\@secnumpunct
  }%
}
\theoremstyle{pineapple}
\newtheorem*{IntroTheorem*}{Theorem}
\newtheorem{Theorem}[section]{Theorem}
\newtheorem{Lemma}[section]{Lemma}
\theoremstyle{durian}
\newtheorem{Remark}[section]{}
\tikzset{
  symbol/.style={
    draw=none,
    every to/.append style={
      edge node={node [sloped, allow upside down, auto=false]{$#1$}}}
  }
}
\setlist[1]{labelindent=\parindent}
\setlist[1]{labelsep=0.5em}
\setlist[enumerate,1]{label={\upshape (\roman*)}, ref={\upshape (\roman*)}}
\newcommand{\leqnomode}{\tagsleft@true\let\veqno\@@leqno}
\newcommand{\reqnomode}{\tagsleft@false\let\veqno\@@eqno}
\tikzset{>={Straight Barb[length=2pt,width=4pt]}, commutative diagrams/arrow style=tikz}
\let\c@equation\c@subsection
\DeclareMathOperator{\Fr}{Fr}
\newcommand*{\coloneqq}{\mathrel{\rlap{%
           \raisebox{0.3ex}{$\m@th\cdot$}}%
           \raisebox{-0.3ex}{$\m@th\cdot$}}%
           =}
\newcommand{\eqqcolon}{=%
           \mathrel{\rlap{%
           \raisebox{0.3ex}{$\m@th\cdot$}}%
           \raisebox{-0.3ex}{$\m@th\cdot$}}}
\newcommand{\parref}[1]{{\bf\ref{#1}}}
\DeclareMathOperator{\rank}{rank}
\DeclareMathOperator{\Hom}{Hom}
\newcommand{\kk}{\mathbf{k}}
\newcommand{\sO}{\mathcal{O}}
\newcommand{\smallbullet}{} % for safety
\DeclareRobustCommand\smallbullet{%
  \mathord{\mathpalette\smallbullet@{0.75}}%
}
\newcommand{\smallbullet@}[2]{%
  \vcenter{\hbox{\scalebox{#2}{$\m@th#1\bullet$}}}%
}
\newcommand{\PP}{\mathbf{P}}
\title{Free curves in Fano hypersurfaces must have high degree}
 \author{Raymond Cheng}
 \address{Institute of Algebraic Geometry \\
   Leibniz University Hannover \\
   Germany
 }
 \email{cheng@math.uni-hannover.de}
\keywords{positive characteristic, rational connectedness, rational curves,
Fano hypersurfaces, Fermat hypersurfaces}
\subjclass[2020]{14M22, 14J70 (primary); 14G17, 14J45 (secondary)}
\begin{document}
\begin{abstract}
The purpose of this note is to show that the minimal \(e\) for which every
smooth Fano hypersurface of dimension \(n\) contains a free rational curve of
degree at most \(e\) cannot be bounded by a linear function in \(n\) when
the base field has positive characteristic. This is done by providing a
super-linear bound on the minimal possible degree of a free curve in certain
Fermat hypersurfaces.
\end{abstract}
\maketitle

\thispagestyle{empty}
\section*{Introduction}
The geometry of smooth projective Fano varieties is controlled by the rational
curves they contain. Seminal work \cite{KMM, Campana} of
Koll\'ar--Miyaoka--Mori and Campana show that, over a field of characteristic
\(0\), every smooth projective Fano variety \(X\) contains a rational curve
\(\varphi \colon \PP^1 \to X\) that, informally speaking, can be deformed to
pass through \(r+1\) general points of \(X\) for any chosen \(r \geq 0\): in
other words, \(X\) is \emph{separably rationally connected}. The precise
condition on the curve \(\varphi\) is that
\(\mathrm{H}^1(\PP^1,\varphi^*\mathcal{T}_X \otimes \sO_{\PP^1}(-r-1)) = 0\);
by way of terminology, \(\varphi\) is said to be \emph{free} or \emph{very
free} when \(r = 0\) or \(r = 1\), respectively. See \cite{Kollar,
Debarre:HDAG} for a presentation of this theory.

Whether smooth projective Fano varieties over a field of positive
characteristic are separably rationally connected is a long-standing open
question. Results are fragmentary even for smooth Fano hypersurfaces in
projective space: The general Fano hypersurface is separably rationally
connected by \cite{Zhu, CZ14, Tian, CR17}; notably, the work of Tian reduces
the problem of separable rational connectedness to \emph{separable
uniruledness}---that is, the existence of a free rational curve---a problem
that often is simpler because free curves typically have significantly lower
degree than very free curves. More recently, \cite[Theorems
3.10 and 3.24]{STZ} and \cite[Corollary 9]{ST} shows that all smooth Fano
hypersurfaces with degree less than the characteristic are separably rationally
connected and even that, up to a minor condition, such hypersurfaces always
contain either free lines or conics. See also \cite[Theorem 34]{LP},
\cite[Theorem 1.5]{BS}, and \cite[Theorem 1.12]{BLLRS} for related results.

The main result of this note is that, nevertheless and contrary to experience,
the minimal \(e\) such that there exists a free rational curve of degree
\(\leq e\) on every smooth Fano hypersurface cannot be bounded by a linear
function in the dimension (or degree); contrast this with the fact that every
smooth Fano hypersurface in characteristic \(0\) contains either a free line or
conic.

\begin{IntroTheorem*}
For any algebraically closed field \(\kk\) of characteristic \(p > 0\),
\[
\limsup_{n \to \infty} \frac{1}{n}
\inf\Set{ e \in \mathbf{Z} |
\begin{array}{c}
\text{for every smooth Fano hypersurface of dimension \(n\) over \(\kk\)} \\
\text{there exists a free rational curve of degree \(\leq e\)}
\end{array}}
=
\infty.
\]
\end{IntroTheorem*}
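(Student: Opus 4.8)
The plan is to establish the limit statement by exhibiting, for infinitely many $n$, a single smooth Fano hypersurface of dimension $n$ on which the minimal degree of a free rational curve grows faster than any linear function of $n$; since the infimum in the statement equals the supremum, over all dimension-$n$ hypersurfaces, of the minimal free degree, this suffices. The hypersurfaces I would use are the \emph{Fermat \(\qbics\)} $X = \{x_0^{q+1}+\cdots+x_{n+1}^{q+1}=0\}\subset\PP^{n+1}$ with $q=p^r$; these are smooth, and Fano precisely when $q\le n$, and they are separably uniruled, so the minimal free degree is finite and the whole content is a super-linear lower bound on it.

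First I would reformulate freeness cohomologically. For $\varphi=[f_0:\cdots:f_{n+1}]\colon\PP^1\to X$ of degree $e$, combining the pulled-back Euler and conormal sequences and using that the partials of the Fermat form are $\partial_i F = x_i^q$ identifies $\varphi^*\mathcal{T}_X$ with $K/\sO$, where $K=\ker\!\big(\sO(e)^{n+2}\xrightarrow{(f_i^q)}\sO((q+1)e)\big)$ and $\sO\hookrightarrow K$ is $1\mapsto(f_i)$. Because the entries $f_i^q$ are $q$-th powers, the map $(f_i^q)$ is the pullback along the relative $q$-power Frobenius $\Fr\colon\PP^1\to\PP^1$ of the analogous map for the Frobenius-twisted curve; as $\Fr^*\sO(c)=\sO(qc)$ and $\Fr$ is flat, this gives $K\cong\bigoplus_l\sO(qc_l+e)$, where $\{c_l\}$ (with $\sum_l c_l=-e$) is the splitting type of $\varphi^*\Omega_{\PP^{n+1}}(1)$. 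Since $\mathrm{H}^1(\sO_{\PP^1})=0$, one gets $\mathrm{H}^1(\varphi^*\mathcal{T}_X)\cong\mathrm{H}^1(K)$, so $\varphi$ is free if and only if $q\,c_{\min}+e\ge -1$; that is, freeness is exactly a near-balancedness of the restricted tangent bundle of $\PP^{n+1}$, with tolerance $\sim e/q$.

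Next I would extract the rigidity coming from the Fermat equation. Differentiating $\sum_i f_i^{q+1}=0$ and using that the Hasse derivatives $D^{(k)}(f_i^q)$ vanish for $1\le k\le q-1$ (as $f_i^q$ is a $q$-th power), the tangent hyperplane $H_s=[f_0^q(s):\cdots:f_{n+1}^q(s)]$ meets $\varphi$ at $\varphi(s)$ with contact order exactly $q$. Hence $q$ occurs in the osculating order sequence $0=j_0<j_1<\cdots<j_{n+1}$ of $\varphi$; comparing with the intersection bound $j_{n+1}\le e$ already forces $e\ge q$ for \emph{every} curve on $X$. Moreover $q$ sitting inside an order sequence with only $n+2$ terms forces a gap, and since a quotient line bundle of $\varphi^*\Omega_{\PP^{n+1}}(1)$ of degree $-(j_i-j_{i-1})$ bounds $c_{\min}\le-\max_i(j_i-j_{i-1})$, freeness $q\,c_{\min}+e\ge-1$ forces every gap to be $\le(e+1)/q$; reaching the order $q$ in steps of that size needs $\gtrsim q^2/(e+1)$ of the $n+1$ available gaps, giving the clean lower bound $e\gtrsim q^2/(n+1)$.

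The hard part — and the step I expect to be the main obstacle — is to upgrade this into a genuinely super-linear bound, equivalently to show that $X$ carries no sufficiently balanced (near-classical) rational curve until the degree is $\gg n$. The pigeonhole and Plücker estimates above only detect one forced order, and hence yield a bound that is at most linear once the Fano constraint $q\le n$ is imposed; the super-linear growth must instead come from showing that the entire low part of the order sequence is degenerate, i.e. that the Frobenius-nonclassicality propagates through the higher associated curves and forces $c_{\min}$ to be far more negative than the average $-e/(n+1)$. Concretely, I would analyse the cokernel of the multiplication map $\bigoplus_i f_i^q\cdot\mathrm{H}^0(\sO(e))\to\mathrm{H}^0(\sO((q+1)e))$ by apolarity, feeding in the full system of relations obtained by Hasse-differentiating $\sum_i f_i^{q+1}=0$ — in which every derivative of order not divisible by $q$ drops out — to construct, for every curve of degree below the target threshold, a nonzero functional witnessing non-freeness; this is exactly where the structural results on \(\qbics\) curves and their Fano schemes from the author's prior work would be brought to bear, together with an optimal choice of $q$ relative to $n$ (and passage to a suitable subsequence of dimensions) to make the resulting threshold super-linear.
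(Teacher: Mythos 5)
Your setup is sound and coincides with the paper's: the Fermat hypersurfaces of degree \(q+1\), the identification of \(\varphi^*\mathcal{E}_X = K\) as a Frobenius pullback so that \(K \cong \bigoplus_l \sO_{\PP^1}(qc_l + e)\) with \(\{c_l\}\) the splitting type of \(\varphi^*\Omega^1_{\PP^{n+1}}(1)\), and the observation that freeness forces this splitting type to be nearly balanced. (A small slip: freeness is global generation of \(\varphi^*\mathcal{T}_X\), i.e. \(qc_{\min}+e \geq 0\), not \(\geq -1\); harmless here since you only use it as a necessary condition.) But your proof stops exactly where the theorem begins. Your osculating-order pigeonhole yields \(e \gtrsim q^2/(n+1)\), which, as you yourself concede, is at best linear once the Fano constraint \(q \leq n\) is imposed; and your plan for the ``hard part''---apolarity on the cokernel of the multiplication map, propagating non-classicality through the higher associated curves, invoking unspecified structural results, and optimizing \(q\) against \(n\)---describes what a proof would need to accomplish rather than accomplishing it. No mechanism is exhibited that actually produces the functional witnessing non-freeness, so the super-linear bound, which is the entire content of the theorem, is not established.

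The missing idea is a rank count on the linear map \(\Phi \colon \mathrm{H}^0(\PP^{q+1},\sO_{\PP^{q+1}}(1)) \to \mathrm{H}^0(\PP^1,\sO_{\PP^1}(e))\), \(T_i \mapsto \varphi_i\), carried out for \(n = q\) (no optimization over \(q\) is needed). Write \(e = mq+r\) with \(0 \leq r \leq q-1\) and decompose each coordinate as \(\varphi_i = \sum_{j=0}^{r} \zeta_{ij}^q S_0^jS_1^{r-j} + \sum_{k=1}^{q-r-1} \eta_{ik}^q S_0^{r+k}S_1^{q-k}\); substituting into the Fermat equation and sorting exponents modulo \(q\) gives the linear relations \(\sum_i \varphi_i\zeta_{ij} = \sum_i \varphi_i\eta_{ik} = 0\). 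Freeness then enters twice. First, \(\mathrm{H}^1(\PP^1,\varphi^*\Omega^1_{\PP^{q+1}} \otimes \sO_{\PP^1}(e+m-1)) = 0\), so the space of all relations of this shape has dimension \(\chi = m-r\); this caps the number of linearly independent components \(T_i \mapsto \eta_{ik}\) at \(m-r\), each of rank at most \(m\), giving \(\rank\Phi \leq (r+1)(m+1) + (m-r)m = m^2+m+r+1\). Second, a free curve must span \(\PP^{q+1}\) or a hyperplane---this nondegeneracy lemma, absent from your sketch, is what makes the rank bound bite---so \(\rank\Phi \geq q+1\). Together these give \(q+1 \leq m^2+m+r\) when \(r>0\) and \(q \leq m^2+m\) when \(r=0\); combined with \(r \leq m\) (your Euler-characteristic count gives this too) one gets \(m \geq \sqrt{q}-1\), hence \(e = mq + r \geq q^{3/2}-q\), which is super-linear in \(n = q\) and proves the theorem along the subsequence \(n = p^\nu\). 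One further caution: your opening claim that these hypersurfaces are separably uniruled, so that the minimal free degree is finite, is not known for \(q \geq 5\)---and is not needed, since an empty set only makes the infimum larger.
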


It suffices to describe one sequence of increasingly high-dimensional
hypersurfaces without low degree free curves, and this is given by the Fermat
hypersurface \(X\) of degree \(q+1\) in \(\PP^{q+1}\) with
\(q \coloneqq p^\nu\) for \(\nu \geq 1\). Shen has studied very free curves in
\(X\) and showed in \cite{Shen} that many integers below \(q^2\) cannot be the
degree of such a curve; a reformulation of this for free curves is given in
\parref{r-at-most-m}. The main technical result \parref{free-bound} excludes a
complementary set of degrees, and they are combined in
\parref{superlinear-bound} to show that there are no gaps below \(q^{3/2}-q\).
The method is to exploit a certain tension arising from the curious
differential geometry of these hypersurfaces: the structure of the equation
implies that, on the one hand, free curves span either the ambient projective
space or else a hyperplane and, on the other hand, there are unexpected
constraints on the coordinate functions of the curve.

These hypersurfaces are well-known to be exceptional and exemplify many
positive characteristic phenomena: see \cite[pp.7--11]{thesis} for a general
survey regarding these hypersurfaces. What is fascinating is that these
hypersurfaces contain many, many rational curves---they are unirational!---and
so the challenge is to develop techniques to study their spaces of rational
curves: see \cite{fano-schemes, qbic-threefolds} for work in this direction.

\smallskip
\noindent\textbf{Acknowledgements. ---}
This note originates from a question posed to me long ago by Aise Johan
de Jong; much gratitude for the many discussions and interest over the years.
Thanks to Jason Starr and Remy van Dobben de Bruyn with whom I shared helpful
conversations on this topic, and the anonymous referees for their careful
reading and helpful comments. I was supported by a Humboldt Research Fellowship
during the preparation of this note.

\section*{Free curves in the Fermat hypersurface}
In what follows, let
\(X \coloneqq \mathrm{V}(T_0^{q+1} + \cdots + T_{q+1}^{q+1})\) be the Fermat
hypersurface of degree \(q+1\) in \(\PP^{q+1}\). The \emph{embedded tangent
bundle} \(\mathcal{E}_X\) is the vector bundle on \(X\) whose fibre at a point
\(x\) is the linear space underlying the embedded tangent space of \(X\) at
\(x\); it fits into a short exact sequence
\[
0 \to \sO_X \to \mathcal{E}_X \to \mathcal{T}_X \to 0.
\]
The extension class is the pullback via the tangent map
\(\mathcal{T}_X \to \mathcal{T}_{\PP^{q+1}}\rvert_X\) of the class of the Euler
sequence, and so there is another short exact sequence
\[
0 \to \mathcal{E}_X \to \sO_X(1)^{\oplus q+2} \to \mathcal{N}_{X/\PP^{q+1}} \to 0
\]
where the second map is
\((\psi_0,\ldots,\psi_{q+1}) \mapsto \sum\nolimits_{i=0}^{q+1} T_i^q \cdot \psi_i\).
Remarkably, as already observed by \cite[Equation (2)]{Shen}, this sequence
twisted down by \(\sO_X(-1)\) is isomorphic to the pullback of the dual Euler
sequence by the \(q\)-power Frobenius morphism \(\Fr\). In particular, there is
an isomorphism
\[
\mathcal{E}_X(-1) \cong \Fr^*(\Omega^1_{\PP^{q+1}}(1)\rvert_X).
\]

Let \(\varphi \colon \PP^1 \to X\) be a nonconstant morphism of degree
\(e = mq + r\), where \(m,r \in \mathbf{Z}\) and \(0 \leq r \leq q-1\); to
simplify notation, \(\varphi\) will sometimes be viewed as a morphism into
\(\PP^{q+1}\). Viewing \(\mathcal{E}_X\) as an extension of \(\mathcal{T}_X\),
it follows that \(\varphi\) is free if and only if
\(\mathrm{H}^1(\PP^1, \varphi^*\mathcal{E}_X \otimes \sO_{\PP^1}(-1)) = 0\).
Combined with the isomorphism above and the projection formula, this implies
that, if \(\varphi\) is free,
\begin{align*}
0
= \mathrm{H}^1(\PP^1, \varphi^*\mathcal{E}_X \otimes \sO_{\PP^1}(-1))
& = \mathrm{H}^1(\PP^1, \varphi^*\Fr^*(\Omega^1_{\PP^{q+1}}(1)\rvert_X) \otimes \sO_{\PP^1}(e-1)) \\
& = \mathrm{H}^1(\PP^1, \varphi^*(\Omega^1_{\PP^{q+1}}(1)) \otimes \Fr_*\sO_{\PP^1}(e-1)).
\end{align*}
Since
\(\Fr_*\sO_{\PP^1}(e-1) \cong \sO_{\PP^1}(m)^{\oplus r} \oplus \sO_{\PP^1}(m-1)^{\oplus q-r}\),
this shows that:

\begin{Lemma}\label{vanishing-H1}
If \(\varphi \colon \PP^1 \to X\) is free, then
\(\mathrm{H}^1(\PP^1, \varphi^*\Omega^1_{\PP^{q+1}} \otimes \sO_{\PP^1}(e + m - 1)) = 0\).
\qed
\end{Lemma}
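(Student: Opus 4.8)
The plan is to read the statement off directly from the displayed vanishing that immediately precedes it, which has already repackaged freeness of \(\varphi\) (via the Frobenius-descent isomorphism \(\mathcal{E}_X(-1) \cong \Fr^*(\Omega^1_{\PP^{q+1}}(1)\rvert_X)\) and the projection formula for the \(q\)-power Frobenius on \(\PP^1\)) into the form
\[
\mathrm{H}^1(\PP^1, \varphi^*(\Omega^1_{\PP^{q+1}}(1)) \otimes \Fr_*\sO_{\PP^1}(e-1)) = 0.
\]
The only task left is to substitute the splitting of the Frobenius pushforward and isolate a single summand, so I expect no genuine obstacle here; the substantive work has already been done.

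First I would record the decomposition \(\Fr_*\sO_{\PP^1}(e-1) \cong \sO_{\PP^1}(m)^{\oplus r} \oplus \sO_{\PP^1}(m-1)^{\oplus q-r}\), a standard computation obtainable from \(\Fr_*\sO_{\PP^1}(d) \cong \bigoplus_{j=0}^{q-1}\sO_{\PP^1}(\lfloor (d-j)/q\rfloor)\) evaluated at \(d = e-1 = mq + r - 1\): the \(r\) indices \(j = 0,\ldots,r-1\) contribute degree \(m\), while the remaining \(q-r\) indices \(j = r,\ldots,q-1\) contribute degree \(m-1\). Next, using \(\varphi^*\sO_X(1) \cong \sO_{\PP^1}(e)\) to rewrite \(\varphi^*(\Omega^1_{\PP^{q+1}}(1)) \cong \varphi^*\Omega^1_{\PP^{q+1}} \otimes \sO_{\PP^1}(e)\), I would distribute the tensor product over the direct sum, turning the vanishing above into
\[
\mathrm{H}^1(\PP^1, \varphi^*\Omega^1_{\PP^{q+1}} \otimes \sO_{\PP^1}(e+m))^{\oplus r} \oplus \mathrm{H}^1(\PP^1, \varphi^*\Omega^1_{\PP^{q+1}} \otimes \sO_{\PP^1}(e+m-1))^{\oplus q-r} = 0.
\]

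Finally I would extract the claimed group. The one point of substance is the choice of summand: since \(0 \leq r \leq q-1\), the multiplicity \(q-r\) is strictly positive, so the second summand must vanish on its own, giving \(\mathrm{H}^1(\PP^1, \varphi^*\Omega^1_{\PP^{q+1}} \otimes \sO_{\PP^1}(e+m-1)) = 0\) as asserted. By contrast, the group carrying the twist \(e+m\) is only forced to vanish when \(r \geq 1\), which is precisely why the statement records the \(e+m-1\) twist. The Lemma is thus the clean corollary of the preceding Frobenius-descent identity, and it is exactly the shape needed to bring the Euler sequence on \(\PP^{q+1}\) to bear on \(\varphi\) in the sequel.
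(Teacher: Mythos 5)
Your proof is correct and takes essentially the same route as the paper: the Lemma's \(\qed\) refers to the preceding discussion, which likewise concludes by substituting the splitting \(\Fr_*\sO_{\PP^1}(e-1) \cong \sO_{\PP^1}(m)^{\oplus r} \oplus \sO_{\PP^1}(m-1)^{\oplus q-r}\) into the displayed vanishing and reading off a summand. Your explicit computation of that splitting and your observation that the multiplicity \(q-r\) is always positive (while \(r\) may vanish, which is why the twist \(e+m-1\) rather than \(e+m\) is recorded) are precisely the details the paper leaves implicit.
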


The converse also holds. Since
\(\chi(\PP^1, \varphi^*\Omega^1_{\PP^{q+1}} \otimes \sO_{\PP^1}(e+m-1)) = (q+2)m - e - m = m - r\),
this gives a relation between \(m\) and \(r\) when \(\varphi\) is free,
recovering \cite[Theorem 1.7]{Shen}:

\begin{Lemma}\label{r-at-most-m}
If \(\varphi \colon \PP^1 \to X\) is free, then \(r \leq m\). \qed
\end{Lemma}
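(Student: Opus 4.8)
The plan is to read the inequality directly off the Euler characteristic computation recorded just above the statement, feeding in the cohomology vanishing supplied by \parref{vanishing-H1}.

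First I would abbreviate $\mathcal{F} \coloneqq \varphi^*\Omega^1_{\PP^{q+1}} \otimes \sO_{\PP^1}(e+m-1)$ and recall from \parref{vanishing-H1} that, since $\varphi$ is free, $\mathrm{H}^1(\PP^1, \mathcal{F}) = 0$. Second, I would confirm the Euler characteristic $\chi(\PP^1, \mathcal{F}) = m - r$ by Riemann--Roch on $\PP^1$: the bundle $\varphi^*\Omega^1_{\PP^{q+1}}$ has rank $q+1$ and degree $-(q+2)e$, so after the twist by $\sO_{\PP^1}(e+m-1)$ the total degree is $(q+1)(m-1) - e$, and adding the rank $q+1$ gives $\chi = (q+1)m - e$, which becomes $m - r$ after substituting $e = mq + r$.

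Combining these two facts finishes the argument at once: the vanishing $\mathrm{H}^1(\PP^1,\mathcal{F}) = 0$ forces $\chi(\PP^1,\mathcal{F}) = \dim_{\kk} \mathrm{H}^0(\PP^1,\mathcal{F}) \geq 0$, whence $m - r \geq 0$, that is, $r \leq m$. There is no genuine obstacle here beyond the Riemann--Roch bookkeeping that the excerpt has essentially carried out already; the only point to watch is the sign in the degree computation together with the substitution $e = mq + r$, so that nonnegativity of $h^0$ delivers $r \leq m$ rather than its reverse.
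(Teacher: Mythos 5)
Your proposal is correct and matches the paper's own argument: the paper proves this lemma precisely by combining the vanishing from \parref{vanishing-H1} with the Euler characteristic computation \(\chi(\PP^1, \varphi^*\Omega^1_{\PP^{q+1}} \otimes \sO_{\PP^1}(e+m-1)) = (q+1)m - e = m - r\), so that \(\mathrm{H}^1 = 0\) forces \(m - r = h^0 \geq 0\). Your Riemann--Roch bookkeeping agrees with the paper's stated value \((q+2)m - e - m\), and no step is missing.
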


The following gives a geometric restriction on free curves in \(X\), and stands
in stark contrast to the fact, see \cite[V.4.4]{Kollar}, that a general
smooth Fano hypersurface contains either a free line or conic:

\begin{Lemma}\label{free-nondegen}
If \(\varphi \colon \PP^1 \to X\) is free, then \(\varphi(\PP^1)\) either
spans \(\PP^{q+1}\) or a hyperplane. Moreover, in the latter case, \(e = mq\)
for positive \(m\), and
\(\varphi^*(\Omega^1_{\PP^{q+1}}(1)) \cong \sO_{\PP^1}(-m)^{\oplus q} \oplus \sO_{\PP^1}\).
\end{Lemma}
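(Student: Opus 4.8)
The plan is to read off the splitting type of \(\varphi^*(\Omega^1_{\PP^{q+1}}(1))\) as a vector bundle on \(\PP^1\) and play the constraint coming from freeness against the one coming from the span of the image. Write \(\varphi^*(\Omega^1_{\PP^{q+1}}(1)) \cong \bigoplus_{i=1}^{q+1} \sO_{\PP^1}(a_i)\). Pulling back the twisted Euler sequence \(0 \to \Omega^1_{\PP^{q+1}}(1) \to \sO^{\oplus (q+2)} \to \sO(1) \to 0\) along \(\varphi\) stays exact, and comparing first Chern classes (the right map is surjective since the coordinate functions of \(\varphi\) have no common zero) gives \(\sum_i a_i = -e\). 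Rewriting the vanishing of \parref{vanishing-H1} as \(\mathrm{H}^1(\bigoplus_i \sO_{\PP^1}(a_i+m-1)) = 0\), freeness is equivalent to the single lower bound \(a_i \geq -m\) for every \(i\); in particular, since \(\sum_i a_i = -e < 0\) is incompatible with all \(a_i \geq 0\), freeness already forces \(m \geq 1\).

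Next I would factor \(\varphi\) through the linear span of its image. Let \(d\) be the dimension of that span, so \(\varphi = \iota \circ \psi\) with \(\iota \colon \PP^d \hookrightarrow \PP^{q+1}\) linear and \(\psi \colon \PP^1 \to \PP^d\) nondegenerate of degree \(e\). The conormal sequence of \(\iota\), whose conormal bundle is \(\sO_{\PP^d}(-1)^{\oplus (q+1-d)}\), pulls back and twists to a short exact sequence \(0 \to \sO^{\oplus (q+1-d)} \to \varphi^*(\Omega^1_{\PP^{q+1}}(1)) \to \psi^*(\Omega^1_{\PP^d}(1)) \to 0\). Writing \(\psi^*(\Omega^1_{\PP^d}(1)) \cong \bigoplus_{j=1}^d \sO_{\PP^1}(c_j)\), the nondegeneracy of \(\psi\) says exactly that its coordinate functions are linearly independent, so the Euler sequence for \(\PP^d\) yields \(\mathrm{H}^0(\psi^*(\Omega^1_{\PP^d}(1))) = 0\); hence \(c_j \leq -1\) for all \(j\), while \(\sum_j c_j = -e\). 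Because \(-c_j \geq 1\), the relevant extension group \(\Ext^1(\bigoplus_j \sO(c_j), \sO^{\oplus (q+1-d)}) = \bigoplus_j \mathrm{H}^1(\sO_{\PP^1}(-c_j))^{\oplus(q+1-d)}\) vanishes, so the sequence splits and the splitting type of \(\varphi^*(\Omega^1_{\PP^{q+1}}(1))\) is precisely \(q+1-d\) copies of \(\sO\) together with the \(\sO(c_j)\).

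With both descriptions in hand the conclusion is a degree count. Freeness forces \(c_j \geq -m\) and nondegeneracy forces \(c_j \leq -1\); summing over the \(d\) indices and using \(\sum_j c_j = -e = -(mq+r)\) gives \(mq + r \leq md\), i.e. \(r \leq m(d-q)\). Since \(r \geq 0\) and \(m \geq 1\), this yields \(d \geq q\), and as \(d \leq q+1\) always, the span is either all of \(\PP^{q+1}\) or a hyperplane. In the hyperplane case \(d = q\) the inequality becomes \(r \leq 0\), so \(r = 0\) and \(e = mq\) with \(m\) positive; then \(\sum_{j=1}^q c_j = -mq\) with each \(c_j \geq -m\) forces every \(c_j = -m\), and the splitting type becomes \(\sO \oplus \sO(-m)^{\oplus q}\), as claimed.

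I expect the only real subtlety to be the bookkeeping that identifies the span dimension \(d\) with the number of (trivial) nonnegative summands of \(\varphi^*(\Omega^1_{\PP^{q+1}}(1))\): this is what allows the single freeness bound \(a_i \geq -m\) and the single nondegeneracy bound \(c_j \leq -1\) to be weighed against the fixed total degree \(-e\). Once the pulled-back conormal sequence is seen to split, the remainder is elementary arithmetic on \(\PP^1\).
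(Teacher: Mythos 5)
Your proof is correct, and it takes a genuinely different route from the paper's. The paper never introduces the linear span of the image: it assumes the image lies in some hyperplane \(\PP^q \subset \PP^{q+1}\), splits \(\varphi^*(\Omega^1_{\PP^{q+1}}(1)) \cong \varphi^*(\Omega^1_{\PP^q}(1)) \oplus \sO_{\PP^1}\), and then returns to the Frobenius isomorphism \(\mathcal{E}_X(-1) \cong \Fr^*(\Omega^1_{\PP^{q+1}}(1)\rvert_X)\): freeness gives \(\dim\mathrm{H}^0(\PP^1, \varphi^*\mathcal{E}_X \otimes \sO_{\PP^1}(-1)) = e\), which forces \(\Fr^*\varphi^*(\Omega^1_{\PP^q}(1)) \otimes \sO_{\PP^1}(e-1)\) to have vanishing \(\mathrm{H}^0\) and \(\mathrm{H}^1\), hence to be \(\sO_{\PP^1}(-1)^{\oplus q}\); since Frobenius multiplies the summand degrees by \(q\), this yields the divisibility \(e = a_iq\) for every summand, from which the uniform splitting, the conclusion \(e = mq\), and the spanning of the hyperplane all drop out at once. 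You instead quarantine the characteristic-\(p\) input inside \parref{vanishing-H1}, read it as the uniform lower bound \(a_i \geq -m\) on the splitting type, and then argue characteristic-freely: factor through the span \(\PP^d\), use the conormal sequence and nondegeneracy to get \(c_j \leq -1\), split the extension by an \(\Ext^1\) vanishing, and finish with the degree count \(e \leq md\). The paper's route makes \(q \mid e\) visible as a direct Frobenius phenomenon; your route is more modular and slightly more general, since the intermediate inequality \(e \leq m \cdot d\) (with \(d\) the span dimension) is a clean statement that shows the dichotomy to be a formal consequence of the single numerical bound of \parref{vanishing-H1}, with no further use of the Fermat equation. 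One cosmetic remark: you only need the forward direction of \parref{vanishing-H1}, which is what the paper proves; your parenthetical claim that freeness is \emph{equivalent} to \(a_i \geq -m\) relies on the converse, which the paper asserts but neither it nor you actually need.
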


\begin{proof}
Identifying \(\mathcal{E}_X = \ker(\sO_X(1)^{\oplus q+2} \to \sO_X(q+1))\) and
using that \(\varphi\) is free shows that
\[
\dim\mathrm{H}^0(\PP^1, \varphi^*\mathcal{E}_X \otimes \sO_{\PP^1}(-1)) = e.
\]
If \(\varphi\) were contained in a hyperplane \(\PP^q \subset \PP^{q+1}\), then
juxtaposing the two Euler sequences shows that
\(\varphi^*(\Omega^1_{\PP^{q+1}}(1))\) would split as
\(\varphi^*(\Omega^1_{\PP^q}(1)) \oplus \sO_{\PP^1}\). Combined with the fact
that \(\mathcal{E}_X(-1) \cong \Fr^*(\Omega^1_{\PP^{q+1}}(1)\rvert_X)\), this
would imply that
\[
e = \dim\mathrm{H}^0(\PP^1, \varphi^*\mathcal{E}_X \otimes \sO_{\PP^1}(-1))
= \dim\mathrm{H}^0(\PP^1, \Fr^*\varphi^*(\Omega^1_{\PP^q}(1)) \otimes \sO_{\PP^1}(e-1)) + e.
\]
Thus \(\Fr^*\varphi^*(\Omega^1_{\PP^q}(1)) \otimes \sO_{\PP^1}(e-1)\)
cannot have global sections. Freeness of \(\varphi\) also implies that it has
no higher cohomology, and so it must be isomorphic to
\(\sO_{\PP^1}(-1)^{\oplus q}\). However, since
\(\varphi^*(\Omega^1_{\PP^q}(1))\) is a vector bundle of degree \(-e\), there
are integers \(a_i \geq 0\) with \(a_1 + \cdots + a_q = e\) such that
\[
\Fr^*\varphi^*(\Omega^1_{\PP^q}(1)) \otimes \sO_{\PP^1}(e-1) \cong
\bigoplus\nolimits_{i = 1}^q \sO_{\PP^1}(-a_iq+e-1).
\]
Therefore \(e = a_iq\) for each \(i\), and so
\(a_1 = \cdots = a_q = m\), \(e = mq\), and
\(\varphi^*(\Omega^1_{\PP^q}(1)) \cong \sO_{\PP^1}(-m)^{\oplus q}\).
Since \(\varphi\) is nonconstant, \(m\) is positive and \(\varphi(\PP^1)\)
spans the hyperplane \(\PP^q\).
\end{proof}

\begin{Remark}\label{frobenius-decomposition}
Viewing \(\varphi\) as a morphism into \(\PP^{q+1}\) and letting
\((\varphi_0: \cdots :\varphi_{q+1})\) be its components,
\parref{free-nondegen} means that the \(\varphi_i\) enjoy at most \(1\) linear
relation in \(\mathrm{H}^0(\PP^1,\sO_{\PP^1}(e))\). Already, this implies
\(e \geq q\). That \(\varphi\) factors through \(X\) means that
\(\sum_{i = 0}^{q+1} \varphi_i^q \cdot \varphi_i = 0\). Upon choosing
homogeneous coordinates \((S_0:S_1)\) for \(\PP^1\), there is a unique
decomposition
\[
\varphi_i =
\sum\nolimits_{j = 0}^r \zeta_{ij}^q \cdot S_0^j S_1^{r-j} +
\sum\nolimits_{k = 1}^{q-r-1} \eta_{ik}^q \cdot S_0^{r+k} S_1^{q-k}
\]
where \(\zeta_{ij} \in \mathrm{H}^0(\PP^1, \sO_{\PP^1}(m))\) and
\(\eta_{ik} \in \mathrm{H}^0(\PP^1, \sO_{\PP^1}(m-1))\). Note that this
decomposition provides a specific choice of isomorphism
\(\Fr_*\sO_{\PP^1}(e) \cong \sO_{\PP^1}(m)^{\oplus r+1} \oplus \sO_{\PP^1}(m-1)^{\oplus q-r-1}\),
and which shall be used below. Substituting this into
the equation of \(X\) and using the fact that \(q\) is a power of the ground
field characteristic shows that
\[
0 =
\sum\nolimits_{j = 0}^r \Big(\sum\nolimits_{i = 0}^{q+1} \varphi_i \zeta_{ij}\Big)^q \cdot S_0^j S_1^{r-j} +
\sum\nolimits_{k = 1}^{q-r-1} \Big(\sum\nolimits_{i = 0}^{q+1} \varphi_i \eta_{ik}\Big)^q \cdot S_0^{r+k} S_1^{q-k}
\]
and so, upon looking at exponents modulo \(q\),
\(\sum_{i = 0}^{q+1} \varphi_i\zeta_{ij} = \sum_{i = 0}^{q+1} \varphi_i \eta_{ik} = 0\) for all \(j\) and \(k\). These
relations may impose further linear relations on the \(\varphi_i\), so give
strong restrictions on the degree \(e\):
\end{Remark}

\begin{Theorem}\label{free-bound}
If \(\varphi \colon \PP^1 \to X\) is free, then
\(q+1 \leq m^2 + m + r\) if \(r > 0\) and
\(q \leq m^2 + m\) if \(r = 0\).
\end{Theorem}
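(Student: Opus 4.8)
The plan is to combine the geometric dichotomy of \parref{free-nondegen}---which forces the coordinate functions \(\varphi_0,\dots,\varphi_{q+1}\) to span a subspace of \(\mathrm{H}^0(\PP^1,\sO_{\PP^1}(e))\) of dimension at least \(q+1\)---with an upper bound on the dimension of that same span, extracted by feeding the relations of \parref{frobenius-decomposition} back into the Frobenius decomposition. The mechanism that links the two sides is that \(x \mapsto x^q\) is an automorphism of \(\kk\), so entrywise \(q\)-th power preserves the rank of a matrix; since the coefficients of each \(\varphi_i\) are precisely the \(q\)-th powers of the coefficients of its components \(\zeta_{ij}\) and \(\eta_{ik}\), the dimension of \(\langle \varphi_0,\dots,\varphi_{q+1}\rangle\) equals the dimension of the span of the coefficient vectors of the \(\zeta_{ij}\) and \(\eta_{ik}\).

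First I would count the contribution of the \(\zeta\)-part. As each \(\zeta_{ij}\) lies in \(\mathrm{H}^0(\PP^1,\sO_{\PP^1}(m))\) and \(j\) runs over \(0,\dots,r\), these produce at most \((r+1)(m+1)\) coefficient directions. The essential gain is for the \(\eta\)-part, and here I would use the relations \(\sum_i \eta_{ik}\varphi_i = 0\) of \parref{frobenius-decomposition}: they say exactly that each tuple \((\eta_{0k},\dots,\eta_{q+1,k})\) is a degree-\((m-1)\) syzygy of \((\varphi_0,\dots,\varphi_{q+1})\), i.e.\ lies in the kernel of the evaluation map \(\mathrm{H}^0(\sO_{\PP^1}(m-1))^{\oplus q+2} \to \mathrm{H}^0(\sO_{\PP^1}(e+m-1))\), \((\psi_i)\mapsto \sum_i \varphi_i\psi_i\). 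Pulling back the Euler sequence and twisting identifies this kernel with \(\mathrm{H}^0(\PP^1,\varphi^*\Omega^1_{\PP^{q+1}}\otimes\sO_{\PP^1}(e+m-1))\); its \(\mathrm{H}^1\) vanishes by \parref{vanishing-H1}, so Riemann--Roch computes its dimension as \((q+2)m-(e+m) = m-r\). Thus the \(q-r-1\) tuples \((\eta_{0k},\dots,\eta_{q+1,k})\) span a space of dimension at most \(m-r\); choosing a basis lets me rewrite the entire \(\eta\)-part of every \(\varphi_i\) in terms of only \(m-r\) functions in \(\mathrm{H}^0(\sO_{\PP^1}(m-1))\), contributing at most \((m-r)m\) coefficient directions.

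Adding the two contributions and using the rank-preservation remark bounds the dimension of \(\langle \varphi_0,\dots,\varphi_{q+1}\rangle\) by \((r+1)(m+1)+(m-r)m = m^2+m+r+1\). To conclude I would invoke \parref{free-nondegen} in its sharp form. When \(r>0\) the hyperplane alternative is excluded (it forces \(e=mq\), hence \(r=0\)), so \(\varphi(\PP^1)\) spans \(\PP^{q+1}\), the span has dimension exactly \(q+2\), and \(q+2 \leq m^2+m+r+1\) gives \(q+1\leq m^2+m+r\). When \(r=0\) the span has dimension at least \(q+1\), and \(q+1 \leq m^2+m+1\) gives \(q \leq m^2+m\).

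The main obstacle is the \(\eta\)-part estimate: one must recognize the relations of \parref{frobenius-decomposition} as genuine syzygies and then locate the smallness \(m-r\) of the syzygy space in the freeness hypothesis through \parref{vanishing-H1}. The remaining points are bookkeeping, but two deserve care: that the \(\zeta\)- and \(\eta\)-exponents occupy disjoint residues modulo \(q\), so the two families of coefficient vectors account for all coefficients of the \(\varphi_i\) without overlap; and that the entrywise \(q\)-th power is invoked only where it legitimately preserves rank. Notably, this route never uses the \(\zeta\)-relations \(\sum_i \zeta_{ij}\varphi_i = 0\), as the corresponding syzygy space is too large to help.
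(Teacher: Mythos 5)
Your proof is correct and is essentially the paper's own argument: the same \(\zeta\)/\(\eta\) decomposition from \parref{frobenius-decomposition}, the same bound \((r+1)(m+1) + (m-r)m\) with the \(\eta\)-part controlled by identifying the relations \(\sum_i \varphi_i\eta_{ik} = 0\) with elements of the \((m-r)\)-dimensional space \(\mathrm{H}^0(\PP^1,\varphi^*\Omega^1_{\PP^{q+1}}\otimes\sO_{\PP^1}(e+m-1))\) via \parref{vanishing-H1} and Riemann--Roch, and the same lower bound \(q+2\) (resp.\ \(q+1\)) on the span coming from \parref{free-nondegen}. The only difference is presentational: you work with coefficient matrices and entrywise \(q\)-th powers (making explicit the rank-preservation under the Frobenius automorphism of \(\kk\)), whereas the paper phrases the same count as \(\rank\Phi \leq \rank\Phi_1 + \rank\Phi_2\) for the maps induced by the identification \(\mathrm{H}^0(\PP^1,\sO_{\PP^1}(e)) \cong \mathrm{H}^0(\PP^1,\Fr_*\sO_{\PP^1}(e))\).
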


\begin{proof}
Consider the linear map
\(
\Phi \colon
\mathrm{H}^0(\PP^{q+1}, \sO_{\PP^{q+1}}(1)) \to
\mathrm{H}^0(\PP^1,\sO_{\PP^1}(e))
\) defining \(\varphi \colon \PP^1 \to \PP^{q+1}\), so that
the \(i\)-th coordinate \(T_i\) maps to \(\varphi_i\). Identify the
target of \(\Phi\) as
\[
\mathrm{H}^0(\PP^1, \sO_{\PP^1}(e)) \cong
\mathrm{H}^0(\PP^1, \Fr_*\sO_{\PP^1}(e)) \cong
\mathrm{H}^0(\PP^1, \sO_{\PP^1}(m))^{\oplus r+1} \oplus
\mathrm{H}^0(\PP^1, \sO_{\PP^1}(m-1))^{\oplus q-r-1}
\]
where \(\Fr \colon \PP^1 \to \PP^1\) is the \(q\)-power Frobenius morphism.
Let \(\Phi_1\) and \(\Phi_2\) be the linear maps obtained by post-composing
\(\Phi\) with projection to \(\mathrm{H}^0(\PP^1, \sO_{\PP^1}(m))^{\oplus r+1}\)
and \(\mathrm{H}^0(\PP^1, \sO_{\PP^1}(m-1))^{\oplus q-r-1}\), respectively.
Elementary linear algebra gives
\[
\rank\Phi \leq \rank \Phi_1 + \rank \Phi_2 \leq (r+1)(m+1) + \rank\Phi_2.
\]

To bound the rank of \(\Phi_2\), let
\(\Phi_{2,k} \colon \mathrm{H}^0(\PP^{q+1},\sO_{\PP^{q+1}}(1)) \to \mathrm{H}^0(\PP^1,\sO_{\PP^1}(m-1))\)
be the further projection to the \(k\)-th component of its
target, so that \(\Phi_{2,k}(T_i) = \eta_{ik}\), notation as in
\parref{frobenius-decomposition}. The discussion of
\parref{frobenius-decomposition} means that the \(\Phi_{2,k}\) lie in the
kernel of the linear map
\begin{align*}
\Hom(\mathrm{H}^0(\PP^{q+1},\sO_{\PP^{q+1}}(1)), \mathrm{H}^0(\PP^1, \sO_{\PP^1}(m-1)))
& \longrightarrow
\mathrm{H}^0(\PP^1, \sO_{\PP^1}(e + m - 1)) \\
\Psi & \longmapsto \sum\nolimits_{i = 0}^{q+1} \varphi_i \cdot \Psi(T_i).
\end{align*}
The kernel of this map is isomorphic to
\(\mathrm{H}^0(\PP^1, \varphi^*\Omega^1_{\PP^{q+1}} \otimes \sO_{\PP^1}(e+m-1))\):
Indeed, pull back the Euler sequence via \(\varphi\) and twist up by
\(\sO_{\PP^1}(e+m-1)\) to obtain the exact sequence
\[
0 \to
\varphi^*\Omega_{\PP^{q+1}}^1 \otimes \sO_{\PP^1}(e+m-1) \to
\sO_{\PP^1}(m-1) \otimes \mathrm{H}^0(\PP^{q+1}, \sO_{\PP^{q+1}}(1)) \to
\sO_{\PP^1}(e+m-1) \to
0,
\]
then use the choice of coordinates \(T_i\) to make the identification
\[
\mathrm{H}^0(\PP^1, \sO_{\PP^1}(m-1)) \otimes \mathrm{H}^0(\PP^{q+1}, \sO_{\PP^{q+1}}(1))
\cong
\Hom(\mathrm{H}^0(\PP^{q+1},\sO_{\PP^{q+1}}(1)), \mathrm{H}^0(\PP^1, \sO_{\PP^1}(m-1))).
\]

On the one hand, \parref{vanishing-H1} and the Euler characteristic computation
following it shows that this kernel has dimension \(m-r\), and so at most
\(m - r\) of the \(q-r-1\)
components \(\Phi_{2,k}\) of \(\Phi_2\) are linearly
independent. Since \(\rank\Phi_{2,k} \leq m\) for each \(k\),
\(\rank\Phi_2 \leq (m-r)m\). Therefore
\[
\rank\Phi \leq (r+1)(m+1) + (m-r)m = m^2 + m + r + 1.
\]
On the other hand, \parref{free-nondegen} means that \(\Phi\) has rank \(q+2\)
if \(r > 0\) and rank at least \(q+1\) if \(r = 0\). Put together, these give
the inequalities of the statement.
\end{proof}

Combining \parref{r-at-most-m} with \parref{free-bound} and grossly
underestimating shows that \(\sqrt{q} - 1 \leq m\). Writing \(e = mq + r\) then
yields a super-linear bound on the minimal degree of a free curve in \(X\):

\begin{Theorem}\label{superlinear-bound}
If \(\varphi \colon \PP^1 \to X\) is free, then \(e \geq q^{3/2} - q\).
\qed
\end{Theorem}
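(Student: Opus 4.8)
The plan is to distill a lower bound on the multiplier \(m\) out of the two preceding results and then read the degree bound off of the decomposition \(e = mq + r\). First I would substitute the inequality \(r \leq m\) of \parref{r-at-most-m} into the estimate of \parref{free-bound}. When \(r > 0\) this upgrades \(q + 1 \leq m^2 + m + r\) to \(q + 1 \leq m^2 + 2m\); when \(r = 0\) one already has \(q \leq m^2 + m \leq m^2 + 2m\). In either case the right-hand side is strictly smaller than \((m+1)^2 = m^2 + 2m + 1\), so that \(q < (m+1)^2\), and completing the square gives \(m > \sqrt{q} - 1\). This is precisely the \emph{gross underestimate} advertised just before the statement: the additive constants are simply discarded so that the quadratic in \(m\) becomes a perfect square.

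With \(m \geq \sqrt{q} - 1\) in hand, the conclusion follows at once. Since \(r \geq 0\), the degree satisfies \(e = mq + r \geq mq\), and inserting the bound on \(m\) yields \(e \geq (\sqrt{q} - 1)q = q^{3/2} - q\), as claimed.

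I do not expect any genuine obstacle to remain at this stage: all of the difficulty has been front-loaded into the rank computation of \parref{free-bound}, and what survives is a single completion of the square together with the trivial estimate \(e \geq mq\). The one point worth verifying is that the two regimes \(r > 0\) and \(r = 0\) both collapse to the same inequality \(q < (m+1)^2\); once this uniform bound is secured the remainder of the argument is identical in both cases and closes immediately.
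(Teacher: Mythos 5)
Your proof is correct and is essentially the paper's own argument: the paper likewise combines \parref{r-at-most-m} with \parref{free-bound}, "grossly underestimates" to get \(\sqrt{q}-1 \leq m\), and concludes via \(e = mq + r \geq mq\). Your completion-of-the-square step is exactly the intended underestimate, so there is nothing to add.
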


The strongest restrictions on possible degrees of free curves are provided by
\parref{r-at-most-m} and \parref{free-bound}. They give the following for the first
few prime powers:
\smallskip
\begin{center}
\begin{tabular}{l|c|c|c|c|c|c|c|c|c|c|c|c|c|c|c|c|c|c}
\(q\) & \(2\) & \(3\) & \(4\) & \(5\)  & \(7\)  & \(8\)  & \(9\)  & \(11\) & \(13\) & \(16\) & \(17\) & \(19\) & \(23\) & \(25\) & \(27\) & \(29\) & \(31\) & \(32\) \\
\hline
\(e_{\mathrm{min}}\) & \(3\) & \(6\) & \(8\) & \(10\) & \(16\) & \(24\) & \(27\) & \(33\) & \(41\) & \(64\) & \(68\) & \(76\) & \(96\) & \(125\) & \(135\) & \(145\) & \(157\) & \(163\)
\end{tabular}
\end{center}
\smallskip
Free curves achieving these lower bounds are known to exist in the first few
cases: see \cite[p.6]{Madore}, \cite[p.69]{Conduche}, and \cite{BDENY} for
\((q,e) = (2,3), (3,6), (4,8)\), respectively. As far as I know, no free
curves are known to exist when \(q \geq 5\). Finally, observe that the
arguments of \parref{vanishing-H1} further imply that if \(\varphi \colon \PP^1
\to X\) is free, then it is \(r\)-free, in the sense that
\(\mathrm{H}^1(\PP^1, \varphi^*\mathcal{T}_X \otimes \sO_{\PP^1}(-r-1)) =
0\), so that for many \(q\) above, a minimal possible free curve is
automatically very free.

\bibliographystyle{amsalpha}
\bibliography{main}
\end{document}